  \def\O{\mathcal{O}} \def\Z{\mathbb{Z}}
\def\P{\mathbf{P}}
\def\F{\mathbf{F}}
\def\G{\mathbf{G}}
\def\coeff{{\propto}}
\newcommand{\dual}[1]{#1^{\mathrm{c}}}
\def\rev{\raisebox{.15ex}{$\scriptscriptstyle \leftarrow$}}
\def\bydef{\coloneqq}
\DeclareMathOperator{\rk}{rank}
\DeclarePairedDelimiter{\abs}{\lvert}{\rvert} 
\DeclarePairedDelimiter{\Set}{\{}{\}} 
\def\tprod{\mathop{\textstyle\prod}}
\def\Wedge^#1{\mathop{\operatorname{\resizebox{.8em}{!}{$\wedge$}}\!\!^{#1}}}
\def\skew#1#2{\left.#1\middle/#2\right.}
\numberwithin{equation}{section}
\author[L.~Darondeau]{Lionel Darondeau}
\address{Departement Wiskunde, KU Leuven}
\email{lionel.darondeau@normalesup.org}
\author[P.~Pragacz]{Piotr Pragacz}
\address{Institute of Mathematics, Polish Academy of Sciences}
\email{P.Pragacz@impan.pl}
\title
[Gysin maps, duality and Schubert classes]
{Gysin maps, duality and Schubert classes}
\keywords{Push-forward, Grassmann bundle, Schubert bundle, duality theorem, Schubert classes}
\subjclass{14C17, 14M15, 14N15, 05E05}
\date{v3}
\begin{document}
\begin{abstract}
  We establish a Gysin formula for Kempf--Laksov flag bundles and we prove a duality theorem for Grassmann bundles.
  We then combine them to study Schubert bundles, their push-forwards and fundamental classes.
\end{abstract}
\maketitle
\section{Introduction}
Schubert varieties belong to the most studied algebraic varieties.
The classical Schubert calculus describes the Chow ring of the Grassmannian (parametrizing all \(d\)-planes of a fixed \(n\)-space) with the help of its Schubert classes indexed by \(\binom{n}{d}\)
partitions contained in the rectangle \((n-d)^{d}\).
Among the fundamental theorems of Schubert calculus there are: basis theorem, duality theorem and the Giambelli formula expressing a general Schubert class in terms of special ones.

Let now \(E\) be a vector bundle of rank \(n\) on a nonsingular variety \(X\) and for \(d<n\), let \(\pi\colon\G_{d}(E)\to X\) be the Grassmann bundle of \(d\)-planes in \(E\).
The basis theorem then presents the Chow ring \(A^{\bullet}(\G_{d}(E))\) of \(\G_{d}(E)\) as a free \(A^{\bullet}X\)-module of rank \(\binom{n}{d}\).
We let \(U_{d}\) (or simply \(U\) when \(d\) is fixed) denote the universal subbundle on the Grassmann bundle \(\G_{d}(E)\), as well as its pullbacks to the different varieties appearing later in the paper.
Since no confusion could arise, we will indeed drop the pullback notation (with a few exceptions).
A set of generators for \(A^{\bullet}(\G_{d}(E))\) over  \(A^{\bullet}X\) is given by the Schur classes \(s_{\beta}(U)\) of the universal subbundle for \(\beta\subseteq(n-d)^{d}\).
Intersection theory on \(\G_{d}(E)\) was studied in~\cite{Groth}, \cite{Kleiman}, \cite{Laksov}, \cite{KL}, \cite{Lascoux} and~\cite{Scott}, and more recently in~\cite{AF12}, \cite{AF15} (see also~\cite{Fulton}, \cite{FuYT} and \cite{FP}).

Given a reference flag of subbundles \(E_{1}\subsetneq\dotsb\subsetneq E_{n}=E\), where \(\rk E_{i}=i\),
for any partition \(\lambda\subseteq(n-d)^{d}\),
there is a \textsl{Schubert bundle} \(\varpi_{\lambda}\colon\Omega_\lambda(E_{\bullet})\to X\) in \(\G_{d}(E)\to X\), described by certain incidence conditions (\textit{cf.}~\eqref{eq:def_omega}).
(See Definition~\ref{defi:schubert} for details.)
It is a subvariety of \(\G_{d}(E)\). The \textsl{Schubert class} \([\Omega_{\lambda}(E_{\bullet})]\) is then the fundamental class of \(\Omega_{\lambda}(E_{\bullet})\) in \(\G_{d}(E)\).
In 1974, in a remarkable paper~\cite{KL}, Kempf and Laksov found a generalization to the relative setting of the Giambelli formula for Schubert classes.
Let \(\F(1,\ldots,d)(E)\to X\) be the flag bundle parametrizing flags of \(1\)-planes, \(2\)-planes, \ldots, \(d\)-planes in \(E\).
The Kempf--Laksov solution used certain flag bundles (which we call \textsl{Kempf--Laksov flag bundles})
\[
  \vartheta_{\mu}
  \colon
  F_{\mu}(E_{\bullet})
  \hookrightarrow
  \F(1,\dotsc,d)(E)
  \to X,
\]
indexed by strict partitions \(\mu\subseteq(n)^{d}\) with \(d\) parts.
(See Definition~\ref{defi:kempf-laksov} for details.)
The critical feature of Kempf--Laksov bundles is of course that for some strict partition \(\nu\)  depending on \(\lambda\), \textit{cf.}~\eqref{eq:def_nu}, \(\F_{\nu}(E_{\bullet})\) is a desingularization of \(\Omega_\lambda(E_\bullet)\).

In the present paper, we study various push-forward formulas related to Grassmann bundles, Schubert bundles and Kempf--Laksov bundles.
We shall work in the framework of intersection theory of~\cite{Fulton} (\textit{cf.} also~\cite[App. B]{FuYT} and~\cite[App. A]{FP}).
Recall that a proper morphism \(F\colon Y\to X\) of algebraic varieties over an algebraically closed field yields an additive map \(F_{\ast}\colon A^{\bullet}Y\to A^{\bullet}X\) of Chow groups induced by push-forward cycles (often called the \textsl{Gysin map}).
The theory developed in~\cite{Fulton} allows one to work with singular varieties, or with cohomology.
In this paper, \(X\) will be always nonsingular.
For possibly singular \(Y\), if \(P\) is a polynomial in the Chern classes of a vector bundle on \(Y\), by push-forward of \(P\) along \(F\), we shall mean \(F_{\ast}(P\cap[Y])\).
\newline

The main results of this paper are Theorem~\ref{theo:gysin} (a Gysin formula for Kempf--Laksov bundles) and Theorem~\ref{theo:duality} (a duality theorem for Grassmann bundles); other results are consequences of these two.

Sect.~\ref{se:gysin} is mainly devoted to the proof of Theorem~\ref{theo:gysin}, namely a compact formula for the Gysin map along \(\vartheta_{\mu}\).
We get an expression in the spirit of~\cite{D} and~\cite{DP1}, presenting the push-forward as some specific coefficient of a certain polynomial, depending only on the Segre classes of the reference flag of bundles \(E_{\bullet}\).

Then, in Sect.~\ref{se:duality}, we push further the study of the Gysin map along \(\pi\colon\G_{d}(E)\to X\) started in~\cite{DP1} to establish Theorem~\ref{theo:duality}, namely a formula for the push-forward along \(\pi\) of the product of two Schur polynomials \(s_{\alpha},s_{\beta}\) of the universal subbundle \(U\to\G_{d}(E)\). It extends the classical duality of (relative) Schubert calculus, since we also compute the intersection in positive degree, \textit{i.e.} when \(\abs{\alpha}+\abs{\beta}>d(n-d)\).

In Sect.~\ref{se:giambelli}, as a corollary of the Gysin formulas of Sect.~\ref{se:gysin} and Sect.~\ref{se:duality}, we give an elementary constructive method to compute Schubert classes in Grassmann bundles.
To the best of our knowledge, this method is new.
See Sect.\ref{sse:Giambelli_intro} for more detail, and further references.
\newline

Before entering the subject, a few words about the notation.
The Greek letters \(\alpha,\beta,\gamma,\delta\); \(\lambda,\mu,\nu,\rho\) always denote partitions.
For \(\ell\geq0\), we denote \((\ell)^{d}\), the rectangular partition with \(d\) parts of length \(\ell\).
We denote by \(\subseteq\) the containment relation of (the diagrams of) partitions.
 For \(d\leq n\) fixed, we let \(\rho\) denote the triangular partition \((d,d-1,\dotsc,1)\).

We will also use some operations of \(\Z^{d}\) whose results are not necessarily partitions. In particular, we write \(\alpha+\beta\) for the termwise sum and \(\alpha-\beta\) for the termwise difference of two partitions, and we denote \(\alpha^{\rev}=(\alpha_{d},\dotsc,\alpha_{1})\).

\section{Kempf--Laksov flag bundles}
\label{se:gysin}
\subsection{Desingularization of Schubert bundles}
\label{sse:desing}
Given two locally free sheaves \(A\) and \(B\), in accordance with~\cite{KL}, we shall say for short that \(A\) is a subsheaf of \(B\) and write \(A\subseteq B\) whenever \(A\) is a subsheaf that is locally a direct summand in \(B\). We may also speak of subbundles for locally free subsheaves of constant rank.

Let \(E\to X\) be a rank \(n\) vector bundle on a variety \(X\).
\begin{defi}
  For an integer \(d\leq n\), the \textsl{Grassmann bundle} \(\G_{d}(E)\) over \(X\) parametrizes subbundles \(V_{d}\subseteq E\) of rank \(d\).
  Namely \(\G_{d}(E)\) is the scheme representing the functor from \(X\)-schemes to sets
  \[
    T\mapsto\Set*{V_{d}\colon V_{d} \text{ locally free subsheaf of rank \(d\) on \(T\) such that \(V_{d}\subseteq E_{T}\)}},
  \]
  where \(E_{T}\) is the pullback of \(E\) to \(T\) (see \cite[I.9.7]{EGA} or \cite{Kleiman}).
\end{defi}
Note that over a point \(x\in X\), one has \(\G_{d}(E)(x)=\Set{V\subseteq E(x)\colon\dim(V)=d}\).
Accordingly, we will call the bundle \(\pi\colon\G_{d}(E)\to X\) the Grassmann bundle of \(d\)-planes in (the fibers of) \(E\). It comes with a universal rank \(d\) subbundle \(U\) of \(\pi^{\ast}E\).
For \(d=1\) one recovers the projective bundle of lines in \(E\), \(\P(E)\bydef\G_{1}(E)\) with the tautological line bundle \(\O_{\P(E)}(-1)\).

Assume now that \(E\) is equipped with a reference flag of subbundles \(E_{1}\subsetneq\dotsb\subsetneq E_{n}=E\), where \(\rk E_{i}=i\).
\begin{defi}
  \label{defi:schubert}
  For a partition \(\lambda\subseteq(n-d)^{d}\), the \textsl{Schubert bundle} \(\Omega_{\lambda}(E_{\bullet})\subseteq\G_{d}(E)\) over \(X\) parametrizes subbundles \(V_{d}\subseteq E\) of rank \(d\) such that \(\rk(V_{d}\cap E_{n-d-\lambda_{i}+i})\geq i\) for \(i=1,\dotsc,d\).
  Namely \(\Omega_{\lambda}(E_{\bullet})\) is the scheme representing the functor from \(X\)-schemes to sets
  \begin{multline*}
    T\mapsto\{V_{d}\colon V_{d} \text{ locally free subsheaf of rank \(d\) on \(T\) such that \(V_{d}\subseteq E_{T}\) and such that}\\
    \text{for }i=1,\dotsc,d\colon\Wedge^{n-d-\lambda_{i} +1}(E_{n-d-\lambda_{i}+i}) \to \Wedge^{n-d-\lambda_{i} +1}(E/V_{d})\text{ is zero}\},
  \end{multline*}
  where we consider the maps induced from \(E_{n-d-\lambda_{i}+i}\hookrightarrow E \twoheadrightarrow E/V_{d}\).
  It is clearly a subfunctor of the above.
\end{defi}

For any partition \(\lambda\subseteq(n-d)^{d}\), the Schubert bundle \(\varpi_{\lambda}\colon\Omega_{\lambda}(E_{\bullet})\to X\) in \(\G_{d}(E)\) is given over the point \(x\in X\) by
\begin{equation}
  \label{eq:def_omega}
  \Omega_{\lambda}(E_{\bullet})(x)
  \bydef
  \Set*{
    V\in \G_{d}(E)(x)
    \colon
    \dim(V\cap E_{n-d-\lambda_{i}+i}(x))\geq i,
    \text{ for }i=1,\dotsc,d
  }.
\end{equation}
In this description, it appears clearly that the only non-trivial conditions correspond to indices \(i\) for which \(\lambda_{i}>0\).

We denote by
\[
  (\nu_{1},\dotsc,\nu_{d})
  \bydef
  (n-d-\lambda_{1}+1,\dotsc,n-d-\lambda_{d}+d)^{\rev}
  =
  (n-d-\lambda_{d}+d,\dotsc,n-d-\lambda_{1}+1),
\]
the dimensions of the spaces of the reference flag involved in the definition of \(\Omega_{\lambda}(E_{\bullet})\)---in reverse order---.
To a partition \(\alpha\subseteq(n-d)^{d}\), one associates a dual partition \(\dual\alpha\subseteq(n-d)^{d}\) by setting
\[
  \dual\alpha
  =
  (n-d)^{d}-\alpha^{\rev},
\]
as illustrated below:
\def\Tbeta
{
  \draw(0,0)--(8,0)--(8,6)--++(-1,0)--++(0,-1)--++(-1,0)--++(0,-1)--++(-2,0)--++(0,-1)--++(-1,0)--++(0,-1)--++(-3,0)--cycle;
  \fill[pattern=crosshatch dots](-1,7)--(8,7)--(8,6)--++(-1,0)--++(0,-1)--++(-1,0)--++(0,-1)--++(-2,0)--++(0,-1)--++(-1,0)--++(0,-1)--++(-3,0)--++(0,-2)--++(-1,0)--cycle;
  \clip(0,0)--(8,0)--(8,6)--++(-1,0)--++(0,-1)--++(-1,0)--++(0,-1)--++(-2,0)--++(0,-1)--++(-1,0)--++(0,-1)--++(-3,0)--cycle;
  \draw[dotted] grid (8,6);
}
\begin{center}
  \begin{tikzpicture}[xscale=.5,yscale=.3,baseline=(current bounding box.center)]
    \draw[<->](-1.5,.2)--(-1.5,6.8)node[midway,sloped,above]{\(d\)};
    \draw[<->](-.8,-.5)--(7.8,-.5)node[midway,sloped,below]{\(n-d\)};
    \begin{scope}
      \Tbeta
    \end{scope}
    \draw (2,5) node[fill=white]{\(\dual\alpha\)};
    \draw (5,2) node[fill=white]{\(\alpha^{\rev}\)};
  \end{tikzpicture}
  .
\end{center}
With this notation
\begin{equation}
  \label{eq:def_nu}
  \nu
  \bydef
  \dual\lambda+\rho.
\end{equation}
So \(\nu\) is a strict partition with \(d\) parts, and furthermore, \(\rho_{i}\leq\nu_{i}\leq\nu_{1}=n-\lambda_{d}\leq n\), for any \(i\).

Note that the above description of \(\Omega_{\lambda}(E_{\bullet})\) can be restated using the strict partition with \(d\) parts \(\nu\subseteq(n)^{d}\) with the conditions
\begin{equation}
  \label{eq:condition_nu}
  \dim(V\cap E_{\nu_{i}}(x))\geq d+1-i,\text{ for }i=1,\dotsc,d.
\end{equation}

Schubert bundles can be singular. One natural way to resolve the singularities is to use some flag bundles (\cite{KL}).
Let us first recall the following classical construction.
\begin{defi}
  For an integer \(d\leq n\), the \textsl{flag bundle} \(\F(1,\dotsc,d)(E)\) over \(X\) parametrizes flags of subbundles \(V_{1}\subsetneq\dotsb\subsetneq V_{d}\subseteq E\) with \(\rk(V_{i})=i\).
  Namely, \(\F(1,\dotsc,d)(E)\) is the scheme representing the functor from \(X\)-schemes to sets
  \[
    T\mapsto\Set*{V_{1}\subsetneq\dotsb\subsetneq V_{d}\colon V_{i} \text{ locally free subsheaf of rank \(i\) on \(T\) such that \(V_{i}\subseteq E_{T}\)}}.
  \]
\end{defi}
Then one can introduce the central objects of this section.
\begin{defi}[\cite{KL}]
  \label{defi:kempf-laksov}
  For a strict partition \(\mu\subseteq(n)^{d}\) with \(d\) parts,
  the \textsl{Kempf--Laksov flag bundle} \(F_{\mu}(E_{\bullet})\subseteq\F(1,\dotsc,d)(E)\) over \(X\) parametrizes flags of subbundles \(V_{1}\subsetneq\dotsb\subsetneq V_{d}\) with \(\rk(V_{i})=i\) such that \(V_{i}\subseteq E_{\mu_{i}}\).
  Namely, \(F_{\mu}(E_{\bullet})\) is the scheme representing the functor from \(X\)-schemes to sets
  \[
    T\mapsto\Set*{V_{1}\subsetneq\dotsb\subsetneq V_{d}\colon V_{i} \text{ locally free subsheaf of rank \(i\) on \(T\) such that \(V_{i}\subseteq (E_{\mu_{i}})_{T}\)}}.
  \]
\end{defi}
The Kempf--Laksov flag bundle \(\vartheta_{\mu}\colon F_{\mu}(E_{\bullet})\to X\) is given over the point \(x\in X\) by
\begin{equation}
  \label{eq:def_f}
  F_{\mu}(E_{\bullet})(x)
  \bydef
  \Set*{
    0\subsetneq V_{1}\subsetneq\dotsb\subsetneq V_{d} \in \F(1,\dotsc,d)(E)(x)
    \colon
    V_{d+1-i}\subseteq E_{\mu_{i}}(x),
    \text{ for }i=1,\dotsc,d
  }.
\end{equation}

These bundles appear naturally as desingularizations of Schubert bundles (see~\cite{KL}).
For a partition \(\lambda\subseteq(n-d)^{d}\), denoting \(\nu=\dual\lambda+\rho\) as above, by \eqref{eq:condition_nu}, the forgetful map \(\F(1,\dotsc,d)(E)\to\G_{d}(E)\) induces a birational map \(\varphi\colon F_{\nu}(E_{\bullet})\to\Omega_{\lambda}(E_{\bullet})\); on the \textsl{Schubert cell} given over the point \(x\in X\) by
\[
  \mathring\Omega_{\lambda}(E_{\bullet})(x)
  \bydef
  \Set*{
    V\in \G_{d}(E)(x)
    \colon
    \dim(V\cap E_{\nu_{i}}(x))= d+1-i,\text{ for } i=1,\dotsc,d
  },
\]
which is open dense in \(\Omega_{\lambda}(E_{\bullet})\), the inverse map is
\(
V\mapsto (V\cap E_{\nu_{d}}(x),\dotsc, V\cap E_{\nu_{1}}(x))
\).

Later, to complete the study of Schubert bundles, we will fix a partition \(\lambda\) and consider \(F_{\nu}(E_{\bullet})\) for \(\nu=\dual\lambda+\rho\), but we shall first study Kempf--Laksov bundles \(F_{\mu}(E_{\bullet})\to X\) in themselves.

\subsection{Gysin formulas for Kempf--Laksov flag bundles}
Regarding our goals, a central feature of Kempf--Laksov flag bundles is that these can be regarded as chains of projective bundles of lines defined by the reference flag of bundles \(E_{\bullet}\) and the universal subbundles \(U_{1},\dotsc,U_{d}\).
\begin{lemm}
  \label{lemm:chain}
  Let \(\mu\subseteq(n)^{d}\) be a strict partition with \(d\) parts.
  For \(e=1,\dotsc,d-1\),
  the forgetful map \(\F(1,\dotsc,d-e+1)(E)\to\F(1,\dotsc,d-e)(E)\)
  induces a map
  \(F_{(\mu_{e},\dotsc,\mu_{d})}(E_{\bullet})\to F_{(\mu_{e+1},\dotsc,\mu_{d})}(E_{\bullet})\),
  isomorphic to \(\P(E_{\mu_{e}}/U_{d-e})\).
  Lastly, the bundle \(F_{(\mu_{d})}(E_{\bullet})\to X\) is isomorphic to \(\P(E_{\mu_{d}})\).
\end{lemm}
\begin{proof}
  Having defined the flag bundles globally, it is sufficient to check the assertions locally.
  Fix \(x\in X\).
  Over a point \((V_{1},\dotsc,V_{d-e})\in F_{(\mu_{e+1},\dotsc,\mu_{d})}(E_{\bullet})(x)\), one has
  \(V_{d-e}\subseteq E_{\mu_{e+1}}(x)\subseteq E_{\mu_{e}}(x)\) and
  the fiber of \(F_{(\mu_{e},\dotsc,\mu_{d})}(E_{\bullet})\to F_{(\mu_{e+1},\dotsc,\mu_{d})}(E_{\bullet})\) over
  \((V_{1},\dotsc,V_{d-e})\) consists of subspaces \(V_{d-e+1}\) such that
  \(V_{d-e}\subsetneq V_{d-e+1}\subseteq E_{\mu_{e}}(x)\).
  Since \(\dim V_{i}=i\), the result follows.
\end{proof}

To sum up, we obtain a chain of projective bundles of lines
\[
  F_{(\mu_{1},\dotsc,\mu_{d})}(E_{\bullet})
  \to
  F_{(\mu_{2},\dotsc,\mu_{d})}(E_{\bullet})
  \to
  \dotsb
  \to
  F_{(\mu_{d-1},\mu_{d})}(E_{\bullet})
  \to
  F_{(\mu_{d})}(E_{\bullet})
  \to
  X,
\]
which is the same as
\begin{equation}
  \label{eq:chain}
  \P(E_{\mu_{1}}/U_{d-1})
  \to
  \P(E_{\mu_{2}}/U_{d-2})
  \to
  \dotsb
  \to
  \P(E_{\mu_{d-1}}/U_{1})
  \to
  \P(E_{\mu_{d}})
  \to
  X.
\end{equation}
As in~\cite{DP1}, one can deduce a Gysin formula for \(F_{\mu}(E_{\bullet})\to X\) from the described structure of chain of projective bundles \eqref{eq:chain}.
For the sake of completeness, let us recall the main lines of the argument.

For a Laurent polynomial \(P\) in \(d\) variables \(t_{1},\dotsc,t_{d}\), and a monomial \(m\), we denote by \([m](P)\) the coefficient of \(m\) in the expansion of \(P\). Clearly, for any second monomial \(m'\), one has \([mm'](Pm')=[m](P)\), a property that we will use repeatedly.
For a vector bundle \(E\to X\) of rank \(r\),  recall the Gysin formula for the projective bundle of lines \(p\colon\P(E)\to X\)
\begin{equation}
  \label{eq:P}
  p_{\ast}(\xi^{i})
  =
  s_{i-r+1}(E)
  =
  [t^{r-1}](t^{i}s_{1/t}(E)),
\end{equation}
where \(\xi\bydef c_{1}(\O_{\P(E)}(1))\), \(s_{i}(E)\) is the \(i\)th Segre class, and \(s_{1/t}(E)\) the Segre polynomial evaluated in \(1/t\) (this yields a Laurent polynomial). The idea is simply to iterate this formula.

In our push-forward formulas, for \(d\) fixed and for a symmetric polynomial \(f\) in \(d\) variables with coefficients in \(A^{\bullet}X\), by \(f(U)\) we shall mean \(f\) specialized with the Chern roots of \(U^{\vee}\) (because the fundamental formula~\eqref{eq:P} use \(\xi=c_{1}(\O_{\P(E)}(-1)^{\vee})\)).

\begin{theo}
  \label{theo:gysin}
  For a rank \(n\) vector bundle \(E\to X\) on a variety \(X\) with a reference flag \(E_{\bullet}\) on it, and for a strict partition \(\mu\subseteq(n)^{d}\) with \(d\leq n\) parts,
  the push-forward along \(\vartheta_{\mu}\colon F_{\mu}(E_{\bullet})\to X\) of a symmetric polynomial \(f\) in \(d\) variables with coefficients in \(A^{\bullet}X\) is
  \[
    (\vartheta_{\mu})_{\ast}(f(U))
    =
    \Big[\tprod_{i=1}^{d} t_{i}^{\mu_{i}-1}\Big]
    \left(
    f(t_{1},\dotsc,t_{d})
    \tprod_{1\leq i<j\leq d}(t_{i}-t_{j})
    \tprod_{1\leq i\leq d}s_{1/t_{i}}(E_{\mu_{i}})
    \right).
  \]
\end{theo}
\begin{proof}
  We enumerate the Chern roots \(\xi_{1},\dotsc,\xi_{d}\) of \(U^{\vee}\) by taking
  \[
    \xi_{i}
    \bydef
    -c_{1}(U_{d+1-i}/U_{d-i}).
  \]
  For notational convenience, we will denote \(\underline{t\xi}_{e}\bydef(t_{1},\dotsc,t_{e},\xi_{e+1},\dotsc,\xi_{d})\) the \(d\)-tuple obtained from \((\xi_{1},\dotsc,\xi_{d})\) after replacement of the first \(e\) roots \(\xi_{i}\) by formal variables \(t_{i}\).

  For \(e_{1}\leq e_{2}\leq d-1\), let us denote
  by \(\int_{e_{1}}^{e_{2}}\) the Gysin map along \(F_{(\mu_{e_{1}+1},\dotsc,\mu_{d})}(E_{\bullet})\to F_{(\mu_{e_{2}+1},\dotsc,\mu_{d})}(E_{\bullet})\)
  and for \(e_{2}=d\), let us denote
  by \(\int_{e_{1}}^{d}\) the Gysin map along \(F_{(\mu_{e_{1}+1},\dotsc,\mu_{d})}(E_{\bullet})\to X\).

  We will prove by induction on \(e=0,\dotsc,d\) that
  \[
    \tag{\ensuremath{\ast}}
    \int_{0}^{e}\!\!f(U)
    =
    \big[\tprod_{i=1}^{e}t_{i}^{\mu_{i}-(d+1-e)}\big]
    \Big(
    f\big(\underline{t\xi}_{e}\big)
    \tprod_{1\leq i<j\leq e}(t_{i}-t_{j})
    \tprod_{1\leq i\leq e}s_{1/t_{i}}(E_{\mu_{i}}-U_{d-e})
    \Big).
  \]
  which for \(e=d\) is the announced result,
  since \(\int_{0}^{d}=(\vartheta_{\mu})_{\ast}\).

  For \(e=0\), this is the definition of \(f(U)\).
  Assume that the formula holds for \(e<d\).

  By Lemma \ref{lemm:chain},
  \(F_{(\mu_{e+1},\dotsc,\mu_{d})}(E_{\bullet})\to F_{(\mu_{e+2},\dotsc,\mu_{d})}(E_{\bullet})\) is isomorphic to \(\P(E_{\mu_{e+1}}/U_{d-(e+1)})\),
  with the notation of \eqref{eq:P}, this projective bundle has rank
  \[
    r-1
    =
    \mu_{e+1}-(d-(e+1))-1,
  \]
  so by \eqref{eq:P} one has
  \[
    \tag{\(\ast_{2}\)}
    \int_{e}^{e+1}
    \xi_{e+1}^{i}
    =
    [t_{e+1}^{\mu_{e+1}-(d+1-(e+1))}]
    \Big(
    t_{e+1}^{i}
    s_{1/t_{e+1}}(E_{\mu_{e+1}}-U_{d-(e+1)})
    \Big).
  \]
  Now by the induction hypothesis (\(\ast\)),
  \[
    \int_{0}^{e+1}f(U)
    =
    \int_{e}^{e+1}
    \int_{0}^{e}f(U)
    =
    \big[\tprod_{i=1}^{e}t_{i}^{\mu_{i}-(d+1-e)}\big]
    \Big(
    \int_{e}^{e+1}
    P(\underline{t\xi}_{e})
    \Big),
  \]
  where \(P=P(E_{\mu_{1}},\dotsc,E_{\mu_{e}})\) is the Laurent polynomial in \(d\) variables such that
  \[
    P(\underline{t\xi}_{e})
    =
    f(\underline{t\xi}_{e})
    \tprod_{1\leq i<j\leq e}(t_{i}-t_{j})
    \tprod_{1\leq i\leq e}s_{1/t_{i}}(E_{\mu_{i}}-U_{d-e}).
  \]
  Note that the Segre classes of the universal bundle \(U_{d-e}\) are polynomials in \(\xi_{e+1},\dotsc,\xi_{d}\).
  To apply (\(\ast_{2}\)), we regard \(P(\underline{t\xi}_{e})\) as a polynomial in \(\xi_{e+1}\) with coefficients in \(A^{\bullet}(F_{(\mu_{e+2},\dotsc,\mu_{d})}(E_{\bullet}))\), according to the formula
  \[
    P(\underline{t\xi}_{e})
    =
    f\big(\underline{t\xi}_{e}\big)
    \tprod_{1\leq i<j\leq e}(t_{i}-t_{j})
    \tprod_{1\leq i\leq e}s_{1/t_{i}}(E_{\mu_{i}}-U_{d-(e+1)})(1-{\xi_{e+1}}/{t_{i}}).
  \]
  Hence, using (\(\ast_{2}\))
  \begin{align*}
    \int_{e}^{e+1}
    P(\underline{t\xi}_{e})
    &=
    [t_{e+1}^{\mu_{e+1}-(d+1-(e+1))}]
    \Big(
    f\big(\underline{t\xi}_{e+1}\big)
    \tprod_{1\leq i<j\leq e}(t_{i}-t_{j})
    \tprod_{1\leq i\leq e+1}s_{1/t_{i}}(E_{\mu_{i}}-U_{d-(e+1)})
    \tprod_{1\leq i\leq e}(1-{t_{e+1}}/{t_{i}})
    \Big)
    \\&=
    [t_{e+1}^{\mu_{e+1}-(d+1-(e+1))}]
    \Big(
    \frac{1}{t_{1}\dotsm t_{e}}
    f\big(\underline{t\xi}_{e+1}\big)
    \tprod_{1\leq i<j\leq e+1}(t_{i}-t_{j})
    \tprod_{1\leq i\leq e+1}s_{1/t_{i}}(E_{\mu_{i}}-U_{d-(e+1)})
    \Big).
  \end{align*}

  It follows that
  \[
    \int_{0}^{e+1}f(U)
    =
    \big[\tprod_{i=1}^{e}t_{i}^{\mu_{i}-(d+1-e)}
    t_{e+1}^{\mu_{e+1}-(d+1-(e+1))}\big]
    \Big(
    \frac{1}{t_{1}\dotsm t_{e}}
    f\big(\underline{t\xi}_{e+1}\big)
    \tprod_{1\leq i<j\leq e+1}(t_{i}-t_{j})
    \tprod_{1\leq i\leq e+1}s_{1/t_{i}}(E_{\mu_{i}}-U_{d-(e+1)})
    \Big).
  \]
  Multiplying the extracted monomial and the polynomial by \(t_{1}\dotsm t_{e}\) one obtains
  \[
    \int_{0}^{e+1}f(U)
    =
    \big[\tprod_{i=1}^{e}t_{i}^{\mu_{i}-(d+1-(e+1))}
    t_{e+1}^{\mu_{e+1}-(d+1-(e+1))}\big]
    \Big(
    f\big(\underline{t\xi}_{e+1}\big)
    \tprod_{1\leq i<j\leq e+1}(t_{i}-t_{j})
    \tprod_{1\leq i\leq e+1}s_{1/t_{i}}(E_{\mu_{i}}-U_{d-(e+1)})
    \Big).
  \]
  This is (\(\ast\)) for \(e+1\), whence by induction the formula (\(\ast\)) holds for any \(e=0,\dotsc,d\), and this finishes the proof.
\end{proof}

Note that the assertion holds for any polynomial \(f\) in \(\xi_{1},\dotsc,\xi_{d}\) with coefficients in \(A^{\bullet}X\), with the same proof, but in applications to Schubert calculus in Grassmann bundles, one shall need only symmetric polynomials.

\subsection{Push-forward of Schur classes}
\label{sse:gysin_schur}
We can now specialize Theorem~\ref{theo:gysin}, to get a formula for the push-forward of Schur polynomials. This is done in Proposition~\ref{prop:gysin}.

For any partition \(\alpha=(\alpha_{1},\dotsc,\alpha_{d})\), recall that the \textsl{Schur polynomial} \(s_{\alpha}\in\Z[t_{1},\dotsc,t_{d}]\) can be defined by the formula
\[
  s_{\alpha}(t_{1},\dotsc,t_{d})
  \bydef
  \frac{\det\big(t_{j}^{\alpha_{i}+d-i}\big)_{1\leq i,j\leq d}}
  {{\displaystyle\tprod_{1\leq i<j\leq d}}(t_{i}-t_{j})}.
\]
Note that in particular for \(\alpha=(i)\), one has \(s_{i}\) the complete symmetric function of degree \(i\).
It is convenient to set \(s_{i}=0\) for \(i<0\).
Then the Jacobi--Trudi identity states
\[
  s_{\alpha}(t_{1},\dotsc,t_{d})
  =
  \det
  \Big(
  s_{\alpha_{i}-i+j}(t_{1},\dotsc,t_{d})
  \Big)_{1\leq i,j\leq d}.
\]

One can generalize further the Segre classes by considering skew Schur functions, defined for two partitions \(\alpha\) and \(\beta\) by
\begin{equation}
  \label{eq:skew}
  s_{\skew{\alpha}{\beta}}(E)
  \bydef
  \det
  \Big(
  s_{\alpha_{i}-i+j-\beta_{j}}(E)
  \Big)_{1\leq i,j\leq d}.
\end{equation}

\begin{prop}
  \label{prop:gysin}
  For a strict partition \(\mu\subseteq(n)^{d}\) with \(d\) parts, and any partition \(\alpha\),
  the push-forward along \(\vartheta_{\mu}\colon F_{\mu}(E_{\bullet})\to X\) of the Schur class \(s_{\alpha}(U)\) is
  \[
    (\vartheta_{\mu})_{\ast}(s_{\alpha}(U))
    =
    \det\big(
    s_{\alpha_{i}-i+d+1-\mu_{j}}(E_{\mu_{j}})
    \big)_{1\leq i,j\leq d}.
  \]
\end{prop}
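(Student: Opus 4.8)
The plan is to substitute $f=s_{\alpha}$ directly into Theorem~\ref{thm:gysin} and to exploit the bialternant definition of the Schur polynomial in order to eliminate the Vandermonde factor. Indeed, multiplying the defining formula of $s_{\alpha}$ by $\tprod_{1\leq i<j\leq d}(t_{i}-t_{j})$ cancels the denominator, so that
\[
  s_{\alpha}(t_{1},\dotsc,t_{d})\tprod_{1\leq i<j\leq d}(t_{i}-t_{j})
  =
  \det\big(t_{j}^{\alpha_{i}+d-i}\big)_{1\leq i,j\leq d}.
\]
Consequently Theorem~\ref{thm:gysin} gives
\[
  (\vartheta_{\mu})_{\ast}(s_{\alpha}(U))
  =
  \Big[\tprod_{i=1}^{d} t_{i}^{\mu_{i}-1}\Big]
  \Big(
  \det\big(t_{j}^{\alpha_{i}+d-i}\big)_{1\leq i,j\leq d}
  \tprod_{1\leq k\leq d}s_{1/t_{k}}(E_{\mu_{k}})
  \Big),
\]
and the whole task reduces to computing this single coefficient.

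Next I would expand the determinant by the Leibniz formula, writing it as $\tsum_{\sigma}(-1)^{\sigma}\tprod_{j=1}^{d} t_{j}^{\alpha_{\sigma(j)}+d-\sigma(j)}$, and interchange this finite sum with the linear coefficient-extraction operator. The crucial point is that, after this expansion, each summand is a product of $d$ factors, the $j$-th of which, namely $t_{j}^{\alpha_{\sigma(j)}+d-\sigma(j)}s_{1/t_{j}}(E_{\mu_{j}})$, involves only the single variable $t_{j}$; moreover the monomial to be extracted, $\tprod_{i} t_{i}^{\mu_{i}-1}$, likewise factors variable by variable. Hence the coefficient extraction factors as a product of one-variable extractions:
\[
  \Big[\tprod_{i} t_{i}^{\mu_{i}-1}\Big]
  \Big(\tprod_{j} t_{j}^{\alpha_{\sigma(j)}+d-\sigma(j)}s_{1/t_{j}}(E_{\mu_{j}})\Big)
  =
  \tprod_{j=1}^{d}
  \big[t_{j}^{\mu_{j}-1}\big]\big(t_{j}^{\alpha_{\sigma(j)}+d-\sigma(j)}s_{1/t_{j}}(E_{\mu_{j}})\big).
\]

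Each one-variable factor is then evaluated by the elementary identity underlying~\eqref{eq:P}, namely $[t^{r-1}](t^{i}s_{1/t}(E))=s_{i-r+1}(E)$, applied with $r=\mu_{j}$ and $i=\alpha_{\sigma(j)}+d-\sigma(j)$; this yields $s_{\alpha_{\sigma(j)}-\sigma(j)+d+1-\mu_{j}}(E_{\mu_{j}})$. Summing over $\sigma$ with signs, I obtain $\tsum_{\sigma}(-1)^{\sigma}\tprod_{j=1}^{d} s_{\alpha_{\sigma(j)}-\sigma(j)+d+1-\mu_{j}}(E_{\mu_{j}})$, which is precisely the Leibniz expansion of $\det\big(s_{\alpha_{i}-i+d+1-\mu_{j}}(E_{\mu_{j}})\big)_{1\leq i,j\leq d}$, the claimed formula. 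I expect no serious obstacle: the argument is a clean specialization, and the only point demanding care is the bookkeeping that guarantees the coefficient extraction genuinely separates across the variables $t_{1},\dotsc,t_{d}$, which holds because both the Segre product and each Leibniz term are already split into one-variable factors.
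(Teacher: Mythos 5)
Your proposal is correct and follows essentially the same route as the paper: substitute \(f=s_{\alpha}\) into Theorem~\ref{thm:gysin}, absorb the Vandermonde via the bialternant formula, and exploit the fact that both the extraction monomial and the \(j\)th column depend only on \(t_{j}\), so that the one-variable identity behind \eqref{eq:P} applies entrywise. The paper merely packages your Leibniz expansion and reassembly more compactly, by dividing the \(j\)th column (and the extracted monomial) by \(t_{j}^{\mu_{j}-1}\) and invoking multilinearity of the determinant in its columns.
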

\begin{proof}
  Applying the result of Theorem~\ref{theo:gysin} to \(f=s_{\alpha}\) one gets
  \[
    (\vartheta_{\mu})_{\ast}(s_{\alpha}(U))
    =
    [t_{d}^{\mu_{d}-1}\dotsb t_{1}^{\mu_{1}-1}]
    \left(
    s_{\alpha}(t_{1},\dotsc,t_{d})
    \tprod_{1\leq i<j\leq d}(t_{i}-t_{j})
    \tprod_{1\leq i\leq d}s_{1/t_{i}}(E_{\mu_{i}})
    \right).
  \]
  Now, by definition
  \[
    s_{\alpha}(t_{1},\dotsc,t_{d})\prod_{1\leq i<j\leq d}(t_{i}-t_{j})
    =
    \det\big(t_{j}^{\alpha_{i}+d-i}\big)_{1\leq i,j\leq d}.
  \]
  Hence, dividing the \(j\)th column of the determinant and also the extracted monomial by \(t_{j}^{\mu_{j}-1}\), for \(j=1,\dotsc,d\),  one gets
  \[
    (\vartheta_{\mu})_{\ast}(s_{\alpha}(U))
    =
    [1]
    \left(
    \tprod_{1\leq j\leq d}s_{1/t_{j}}(E_{\mu_{j}})
    \det\big(t_{j}^{\alpha_{i}+d-i+1-\mu_{j}}\big)_{1\leq i,j\leq d}
    \right).
  \]
  Then using again the linearity of the determinant with respect to columns as in~\cite[Lemma~4.1]{DP1}, one obtains:
  \[
    (\vartheta_{\mu})_{\ast}(s_{\alpha}(U))
    =
    \det\big(
    [1]\big(
    t_{j}^{\alpha_{i}+d-i+1-\mu_{j}}
    s_{1/t_{j}}(E_{\mu_{j}})
    \big)
    \big)_{1\leq i,j\leq d}
    =
    \det\big(
    s_{\alpha_{i}-i+d+1-\mu_{j}}(E_{\mu_{j}})
    \big)_{1\leq i,j\leq d}.
  \]
  This is the announced formula.
\end{proof}

\section{Duality for Grassmann bundles}
\label{se:duality}
We investigate the combinatorics of partitions with at most \(d\) parts, and deduce a strong version of the duality theorem in Grassmann bundles.
We use Young tableaux. For all terminology and standard results used in this section, we refer the reader to~\cite{FuYT}.

\subsection{Littlewood--Richardson numbers}
By definition, the Littlewood--Richardson number \(c_{\alpha\,\beta}^{\gamma}\) associated to three partitions \(\alpha\), \(\beta\), \(\gamma\) is the coefficient \(\langle s_{\alpha}s_{\beta},s_{\gamma}\rangle\) of \(s_{\gamma}\) in the decomposition of the symmetric function \(s_{\alpha}s_{\beta}\) over the Schur basis. It is thus symmetric in \(\alpha\) and \(\beta\). The coefficient \(c_{\alpha\,\beta}^{\gamma}\) is zero if \(\alpha\not\subseteq\gamma\) or \(\beta\not\subseteq\gamma\).

If \(\beta\subseteq\alpha\), the number \(c_{\beta\,\gamma}^{\alpha}\) is also the coefficient \(\langle s_{\skew{\alpha}{\beta}},s_{\gamma}\rangle\) of \(s_{\gamma}\) in the decomposition of the symmetric function \(s_{\skew{\alpha}{\beta}}\) over the Schur basis.
It follows that \(c_{\beta\,\gamma}^{\alpha}=c_{\gamma\,\beta}^{\alpha}\) depends only on the partition \(\beta\) and the skew partition \(\skew{\alpha}{\gamma}\).
This observation implies the following.
\begin{lemm}
  \label{lemm:L-R}
  Let \(\alpha, \beta\) be partitions with at most \(d\) parts.
  For any partition \(\gamma\) and any rectangular partition with \(d\) parts \(\square\subseteq\gamma\), one has:
  \[
    c_{\beta\,\gamma}^{\alpha}
    =
    c_{\beta\,(\gamma-\square)}^{\alpha-\square}.
  \]
\end{lemm}
\begin{proof}
  Assume \(\square\subseteq\gamma\subseteq\alpha\) (otherwise both coefficients in the sought formula are zero).
  The skew shapes \(\skew{\alpha}{\gamma}\) and \(\skew{{(\alpha-\square)}}{{(\gamma-\square)}}\) are then the same, and this finishes the proof.
\end{proof}

\subsection{A product formula}
We shall now give a modern treatment of a theorem of Jacobi (1840) and Naegelbasch (1871) asserting that the product of two Schur functions in \(d\) variables can be expressed as a determinant of order \(d\) in \(s_{i}\) (see~\cite[p.~188]{Lascoux77} and see \cite[I.3.8]{Macdonald} for another proof).
Using \eqref{eq:skew}, one can rather interpret this determinant as a skew Schur function.
\begin{lemm}
  \label{lemm:prod/skew}
  Let \(\alpha\) and \(\beta\) be partitions with at most \(d\) parts.
  For any rectangular partition with \(d\) parts \(\square\supseteq \beta\), one has
  \[
    s_{\alpha}(t_{1},\dotsc,t_{d})s_{\beta}(t_{1},\dotsc,t_{d})
    =
    s_{\skew{\square+\alpha}{\square-\beta^{\rev}}}(t_{1},\dotsc,t_{d}).
  \]
\end{lemm}
\begin{proof}
  For any skew Schur diagram \(\delta\)
  \[
    s_{\delta}(t_{1},\dotsc,t_{d})
    =
    \sum_{[T]=\delta}t^{T},
  \]
  where the sum is over the semi-standard tableaux \(T\) with shape \(\delta\) and values in \(\Set{1,\dotsc,d}\) and where for such a tableau \(t^{T}\) denotes the monomial \(t_{1}^{m_{1}}\dotsm t_{d}^{m_{d}}\), with \(m_{i}\) the number of entries of \(T\) with value \(i\) for \(i=1,\dotsc,d\).

  As a consequence, it is sufficient to show that there is a bijection between the pairs of tableaux with respective shapes \(\alpha,\beta\) and the tableaux with shape \(\skew{\square+\alpha}{\square-\beta^{\rev}}\).

  Given a filling \(T_{\alpha}\) of \(\alpha\) together with a filling \(T_{\beta}\) of \(\beta\),
  we apply the following process to \(T_{\beta}\):
  replace each entry \(v\) by \(d+1-v\) and rotate the diagram by half a turn.
  This yields a skew tableau \(T_{\beta}'\) with shape \([T_{\beta}']=\skew{\square}{\square-\beta^{\rev}}\).
  Then the concatenation of \(T_{\beta}'\) and \(T_{\alpha}\) is a tableau with shape \(\skew{\square+\alpha}{\square-\beta^{\rev}}\) (see Figure~\ref{fig:skew}).
  \begin{figure}[!ht]
    \centering
    \def\Talpha
    {
      \draw (0,1)--(0,7)--(9,7)--++(0,-3)--++(-4,0)--++(0,-2)--++(-1,0)--++(0,-1)--++(-2,0)--cycle (0,0)--(0,1);
      \clip (0,1)--(0,7)--(9,7)--++(0,-3)--++(-4,0)--++(0,-2)--++(-1,0)--++(0,-1)--++(-2,0)--cycle;
      \draw[dotted] grid (9,7);
    }
    \resizebox{\linewidth}{!}{
      \begin{tikzpicture}[xscale=.8,yscale=.5]
	\path (-1,16)--(7,16) node[midway,above]{\(T_{\beta}\)};
	\path (9,16)--(18,16) node[midway,above]{\(T_{\alpha}\)};
	\path (8,7) node[above]{\(T_{\skew{\square+\alpha}{\square-\beta^{\rev}}}\)};
	\draw[<->](-1.5,.2)--(-1.5,6.8)node[midway,sloped,above]{\(d\)};
	\begin{scope}[xshift=9cm,yshift=9cm] \Talpha \end{scope}
	\begin{scope}[xshift=8cm] \Talpha \end{scope}
	\begin{scope}[rotate=180,xshift=-7cm, yshift=-16cm] \Tbeta \end{scope}
	\begin{scope} \Tbeta \end{scope}
	\path(3,5)node[fill=white]{\(\square-\beta^{\rev}\)};
	\foreach\i in {1,...,6}
	{
	  \path(9,16-\i+1)--(9,16-\i)node[midway,right]{\(\geq\i\)};
	  \path(8,7-\i+1)--(8,7-\i)node[midway,right]{\(\geq\i\)};
	}
	\foreach\i in {1,...,6}
	{
	  \path(-1,16-\i+1)--(-1,16-\i)node[midway,right]{\(\geq\i\)};
	}
	\foreach\i in {2,...,7}
	{
	  \path(7,7-\i+1)--(7,7-\i)node[midway,right]{\(\leq\i\)};
	}
	\draw[<->,very thick](4.5,3.5)to[bend right=80]node[fill=white,below left=1.2,midway]{\(v\leftrightarrow d+1-v\)}++(-2,9);
	\draw[<->,very thick](10.5,4.5)to[bend right=80]node[fill=white,left=1.2,midway]{same}++(1,9);
      \end{tikzpicture}
    }
    \caption{The sought bijection}
    \label{fig:skew}
  \end{figure}

  Indeed, in the last column of \(T_{\beta}'\) the entry of the \(i\)th row has value \(\leq (d+1)-(d+1-i)=i\) and in the first column of \(T_{\alpha}\) the entry of the \(i\)th row has value \(\geq i\).

  The inverse map is the obvious one. Take the filling of the intersection between \(\skew{\square+\alpha}{\square-\beta^{\rev}}\) and \(\square\), rotate it, and apply the involution \(v\leftrightarrow d+1-v\) to get \(T_{\beta}\), then take the filling of the remaining part in \(\skew{\square+\alpha}{\square-\beta^{\rev}}\) to get \(T_{\alpha}\).
\end{proof}

We can deduce a rule for Littlewood--Richardson numbers.
\begin{coro}
  \label{coro:prod/skew}
  With the same notation and hypotheses, for any partition \(\gamma\) with at most \(d\) parts, the following Littlewood--Richardson numbers coincide:
  \[
    c_{\alpha\,\beta}^{\gamma}
    =
    c_{(\square-\beta^{\rev})\,\gamma}^{\square+\alpha}.
  \]
\end{coro}
\begin{proof}
  Indeed, one has
  \(
  \langle s_{\alpha}s_{\beta},s_{\gamma}\rangle
  =
  \langle s_{\skew{\square+\alpha}{\square-\beta^{\rev}}},s_{\gamma}\rangle
  \).
\end{proof}

\subsection{A strong version of the duality theorem}
The combination of Lemma~\ref{lemm:L-R} and Corollary \ref{coro:prod/skew} yields the following proposition.
\begin{prop}
  \label{prop:LR}
  For \(\alpha,\beta,\gamma\) partitions with at most \(d\) parts, if \(\square_{\beta}\supseteq\beta\) and \(\square^{\gamma}\subseteq\gamma\) are two rectangular partitions with \(d\) parts, then
  \[
    c_{\alpha\,\beta}^{\gamma}
    =
    c_{(\square_{\beta}-\beta^{\rev})\,(\gamma-\square^{\gamma})}^{\alpha+\square_{\beta}-\square^{\gamma}}.
  \]
\end{prop}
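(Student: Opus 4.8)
The plan is to chain the two corollaries just established: I would apply Corollary~\ref{cor:prod/skew} first, to convert the ordinary Littlewood--Richardson number into a skew one in which the rectangle $\square_{\beta}$ has been added to the superscript, and then apply Corollary~\ref{cor:L-R} to strip the second rectangle $\square^{\gamma}$ off the new second subscript and off the superscript simultaneously.

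First I would apply Corollary~\ref{cor:prod/skew} with the rectangular partition taken to be $\square_{\beta}$, which contains $\beta$ by hypothesis. This gives
\[
  c_{\alpha\,\beta}^{\gamma}
  =
  c_{(\square_{\beta}-\beta^{\rev})\,\gamma}^{\square_{\beta}+\alpha}.
\]
At this point I would check that the new first subscript $\square_{\beta}-\beta^{\rev}$ is a genuine partition with at most $d$ parts: writing $\square_{\beta}=(\ell)^{d}$ with $\ell\geq\beta_{1}$, its entries are $\ell-\beta_{d}\geq\dotsb\geq\ell-\beta_{1}\geq0$, so it is a partition, while $\square_{\beta}+\alpha$ is visibly a partition with at most $d$ parts as a sum of two such. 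Hence the right-hand side above is a Littlewood--Richardson number to which Corollary~\ref{cor:L-R} applies.

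Next I would invoke Corollary~\ref{cor:L-R}, matching its superscript with $\square_{\beta}+\alpha$, its first subscript with $\square_{\beta}-\beta^{\rev}$, its second subscript with $\gamma$, and its rectangle with $\square^{\gamma}\subseteq\gamma$. Since $\square^{\gamma}$ is rectangular with $d$ parts and contained in $\gamma$, the corollary subtracts $\square^{\gamma}$ from both the second subscript and the superscript, leaving the first subscript unchanged:
\[
  c_{(\square_{\beta}-\beta^{\rev})\,\gamma}^{\square_{\beta}+\alpha}
  =
  c_{(\square_{\beta}-\beta^{\rev})\,(\gamma-\square^{\gamma})}^{\alpha+\square_{\beta}-\square^{\gamma}}.
\]
Composing the two displayed identities yields exactly the asserted equality.

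I expect no genuine analytic obstacle here; the only points demanding care are bookkeeping ones, namely verifying that each tuple appearing as a subscript or superscript in the intermediate step is an honest partition with at most $d$ parts so that the hypotheses of Corollary~\ref{cor:L-R} are literally satisfied, and keeping track of the two containment conditions $\square_{\beta}\supseteq\beta$ and $\square^{\gamma}\subseteq\gamma$ that license the two corollaries in sequence. Should either containment fail, the relevant coefficients vanish on both sides and the identity holds trivially, so the non-degenerate case treated above suffices.
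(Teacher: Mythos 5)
Your proof is correct and is exactly the argument the paper intends: the paper gives no written proof beyond the remark that the proposition follows by combining Corollary~\ref{cor:prod/skew} and Corollary~\ref{cor:L-R}, and your chaining (first \ref{cor:prod/skew} with rectangle $\square_{\beta}$, then \ref{cor:L-R} with rectangle $\square^{\gamma}$) is the intended combination. Your extra bookkeeping that $\square_{\beta}-\beta^{\rev}$ and $\square_{\beta}+\alpha$ are genuine partitions with at most $d$ parts is a correct and welcome verification of the hypotheses of Corollary~\ref{cor:L-R}.
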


We derive the following statement.
\begin{theo}[Strong duality theorem]
  \label{theo:duality}
  Let \(E\to X\) be a rank \(n\) vector bundle over a variety, and let \(\alpha,\beta\) be two partitions with at most \(d\) parts.
  For any rectangle partition \((\ell)^{d}\supseteq\beta\),
  one has the following pushforward formula along
  \(\pi\colon\G_{d}(E)\to X\):
  \[
    \pi_{\ast}(s_{\alpha}(U)s_{\beta}(U))
    =
    s_{\skew{(\ell-n+d)^{d}+\alpha}{(\ell)^{d}-\beta^{\rev}}}(E).
  \]
  In particular, if \(\beta\subseteq(n-d)^{d}\):
  \[
    \pi_{\ast}(s_{\alpha}(U)s_{\beta}(U))
    =
    s_{\skew{\alpha}{\dual\beta}}(E).
  \]
\end{theo}
Note that using \eqref{eq:skew}, the determinant \(s_{\skew{\alpha}{\dual\beta}}(E)\) is easily shown to be indeed symmetric in \(\alpha\) and \(\beta\).
One direct consequence of our theorem is the following.
\begin{coro}[Duality theorem]
  For a partition \(\alpha\) and a partition \(\beta\subseteq(n-d)^{d}\) one has
  \[
    \pi_{\ast}(s_{\alpha}(U)s_{\beta}(U))
    =
    \begin{cases}
      1&\text{if }\alpha=\dual\beta\\
      0&\text{if }\alpha\not\supseteq\dual\beta.
    \end{cases}
  \]
\end{coro}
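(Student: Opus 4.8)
The plan is to deduce the corollary from the strong duality theorem (Theorem~\ref{thm:duality}). Choosing the rectangle \((\ell)^{d}=(n-d)^{d}\)---which contains \(\beta\) precisely because \(\beta\subseteq(n-d)^{d}\)---makes \((\ell-n+d)^{d}\) the empty partition and turns \((\ell)^{d}-\beta^{\rev}\) into \(\dual\beta\); this is the ``in particular'' clause of the theorem, so
\[
  \pi_{\ast}\big(s_{\alpha}(U)s_{\beta}(U)\big)
  =
  s_{\left.\alpha\middle/\dual\beta\right.}(E).
\]
Everything then reduces to evaluating the skew Schur class \(s_{\left.\alpha\middle/\dual\beta\right.}(E)\) in the two regimes of the corollary, working throughout from the determinantal definition \(s_{\left.\alpha\middle/\mu\right.}(E)=\det\big(s_{\alpha_{i}-i+j-\mu_{j}}(E)\big)_{1\le i,j\le d}\) with \(\mu=\dual\beta\).

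When \(\alpha=\dual\beta\) the plan is to note that this matrix is triangular. On the diagonal the index is \(0\), so each diagonal entry is \(s_{0}(E)=1\); and for \(i>j\) one has \(j-i<0\) together with \((\dual\beta)_{i}\le(\dual\beta)_{j}\) (as \(\dual\beta\) is a partition), so the index \((\dual\beta)_{i}-i+j-(\dual\beta)_{j}\) is strictly negative and the entry vanishes. The determinant of an upper-triangular matrix with unit diagonal is \(1\), giving \(s_{\left.\dual\beta\middle/\dual\beta\right.}(E)=1\), i.e. the first line of the corollary.

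When \(\alpha\not\supseteq\dual\beta\), i.e. \(\dual\beta\not\subseteq\alpha\), the plan is to exhibit a zero block in the same matrix. Let \(k\) be the first index with \((\dual\beta)_{k}>\alpha_{k}\). For every \(i\ge k\) and \(j\le k\) one has \(\alpha_{i}\le\alpha_{k}\), \((\dual\beta)_{j}\ge(\dual\beta)_{k}\) and \(j-i\le0\), hence \(\alpha_{i}-i+j-(\dual\beta)_{j}\le\alpha_{k}-(\dual\beta)_{k}<0\) and the entry is zero. Thus the \(d-k+1\) rows \(k,\dots,d\) vanish on the \(k\) columns \(1,\dots,k\); being supported on the remaining \(d-k\) columns, these \(d-k+1\) rows are linearly dependent, so the determinant---and with it the class \(s_{\left.\alpha\middle/\dual\beta\right.}(E)\)---is zero, which is the second line of the corollary.

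Conceptually this last vanishing is the standard fact that the skew Schur \emph{function} \(s_{\left.\alpha\middle/\mu\right.}\) vanishes whenever \(\mu\not\subseteq\alpha\) (no semistandard filling exists, so the tableau sum of Lemma~\ref{lem:prod/skew} is empty), the statement for classes following by specialization at the Segre classes of \(E\). The only delicate point in the whole argument is the index bookkeeping that locates the zero block in the case \(\dual\beta\not\subseteq\alpha\); once that block is identified the rank argument is immediate, so this is the step I would write out in full.
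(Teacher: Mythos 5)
Your proposal is correct and takes essentially the paper's route: the paper offers no separate proof of the corollary, presenting it as an immediate consequence of the ``in particular'' clause of Theorem~\ref{thm:duality} with \((\ell)^{d}=(n-d)^{d}\), which gives \(\pi_{\ast}(s_{\alpha}(U)s_{\beta}(U))=s_{\left.\alpha\middle/\dual\beta\right.}(E)\) exactly as you do. Your explicit evaluation of the skew determinant---upper triangular with unit diagonal when \(\alpha=\dual\beta\), and a zero block forcing \(d-k+1\) rows into \(d-k\) columns when \(\dual\beta\not\subseteq\alpha\)---is precisely the standard verification the paper leaves implicit, and your index bookkeeping in both cases checks out.
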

Under the further assumption \(\abs{\alpha}+\abs{\beta}\leq d(n-d)\), this is the standard duality theorem in Schubert calculus (see~\cite[Prop.~14.6.3]{Fulton}).
\begin{proof}[Proof of the theorem]
  One has
  \[
    s_{\alpha}(U)s_{\beta}(U)
    =
    \sum c_{\alpha\,\beta}^{\gamma}s_{\gamma}(U),
  \]
  and since \(\rk(U)=d\), one can restrict to the partitions \(\gamma\) with at most \(d\) parts.
  Using~\cite[Sect.~4]{DP1} to compute the push-forward, one obtains
  \[
    \pi_{\ast}(s_{\alpha}(U)s_{\beta}(U))
    =
    \sum c_{\alpha\,\beta}^{\gamma}s_{\gamma-(n-d)^{d}}(E).
  \]
  But now, one can assume that \(\gamma\supseteq(n-d)^{d}\), since otherwise the summand indexed by \(\gamma\) does not contribute, whence after a change of variable
  \[
    \pi_{\ast}(s_{\alpha}(U)s_{\beta}(U))
    =
    \sum c_{\alpha\,\beta}^{(\gamma+(n-d)^{d})}s_{\gamma}(E).
  \]
  Applying Proposition \ref{prop:LR} with \(\square_{\beta}=(\ell)^{d}\) and \(\square^{\gamma}=(n-d)^{d}\), one obtains
  \[
    \pi_{\ast}(s_{\alpha}(U)s_{\beta}(U))
    =
    \sum c_{((\ell)^{d}-\beta^{\rev})\,\gamma}^{\alpha+(\ell-n+d)^{d}}s_{\gamma}(E)
    =
    s_{\skew{\alpha+(\ell-n+d)^{d}}{(\ell)^{d}-\beta^{\rev}}}(E),
  \]
  and this finishes the proof.
\end{proof}

Note that a related formula was established in~\cite{JLP}: the push-forward of the product of a Schur polynomial of the universal subbundle by another Schur polynomial of the universal quotient bundle.
The formula of~\cite{JLP} is easily shown to be a consequence of our new formula.
We denote \(\lambda^{\sim}\) the conjugate partition of a partition \(\lambda\). We denote by \(\lambda\sqcup\gamma\) the concatenation of partitions \(\lambda\) and \(\gamma\); it is in general not a partition.
Let us denote by \(Q\) the universal quotient bundle on \(\G_{d}(E)\stackrel{\pi}\to X\).
For a partition \(\alpha\) with at most \(d\) parts and a partition \(\beta\) with at most \((n-d)\) parts, one has
\[
  \pi_{\ast}(s_{\alpha}(U)s_{\beta}(Q))
  =
  \sum_{\mu\subseteq(n-d)^{d}}
  (-1)^{\abs{\mu}}
  s_{\skew{\alpha}{\dual\mu}}(E)s_{\skew{\beta}{\mu^{\sim}}}(E)
  =
  s_{(\alpha-(n-d)^{d})\sqcup\beta}(E).
\]
The first equality follows from the relation \(Q=E-U\) in the Grothendieck group of \(X\), applying our push-forward formula to each factor of the expansion of \(s_{\alpha}(U)s_{\beta}(E-U)\).
The second equality is shown using Laplace expansion for Schur functions in terms of skew Schur functions.
Remark that by permuting the rows, the right hand side of the formula can be written
\[
  (-1)^{d(n-d)}
  s_{(\beta-(d)^{n-d})\sqcup\alpha}(E),
\]
which is closer to the expression of~\cite{JLP} (our convention of signs is different).

\section{Gysin maps and Schubert classes}
\label{se:giambelli}
A classical problem in Schubert calculus is to give a formula for Schubert classes.
Such formulas were given by Kempf--Laksov~\cite{KL}, Lascoux~\cite{Lascoux} in the framework of classical intersection theory, and Anderson--Fulton~\cite{AF,Anderson} in the framework of equivariant cohomology in intersection theory.
In~\cite{LT09}, Laksov and Thorup rephrase the Kempf--Laksov formula in terms of the exterior algebra of a suitable module.
In~\cite{AF15}, Anderson and Fulton gave also a variety of Kempf--Laksov formulas for degeneracy loci.

Note that like the approaches of~\cite{KL},~\cite{Lascoux},~\cite{AF15}, and others, our approach uses the desingularization of Schubert varieties (or degeneracy loci) by chains of projective bundles, and various Gysin formulas for them.
The flag bundles \(F_{\nu}(E_{\bullet})\) are well-known desingularizations of Schubert bundles \(\Omega_{\lambda}(E_{\bullet})\), which were used by many authors.
We shall also use them, but in a different way.
The most obvious difference is that the reasoning, directly over the base variety \(X\), is based on the plain comparison of two Gysin formulas for Schubert bundles, derived from our two main new results.
The formula for Schubert classes appear then as an elementary corollary, the construcitve proof of which relies on solving invertible triangular systems of equations.

\subsection{An elementary approach to compute Schubert classes}
\label{sse:Giambelli_intro}
This approach fits in a broader context than this of the present paper, and has the noteworthy feature that, like in~\cite{AF15}, it should be adaptable to the symplectic and orthogonal settings, with some technical adjustments.
The main idea is summarized in the following Figure~\ref{fig:sumup}.
\begin{figure}[!ht]
  \centering
  \subfloat[\label{fig:old}]{
    \begin{tikzpicture}[baseline=0pt]
      \node (Fl) at (2.5,2) {\(\F(1,\dotsc,d)(E)\)};
      \node (F) at (0,2) {\(F_{\nu}(E_{\bullet})\)};
      \node (Om) at (0,.5) {\(\Omega_{\lambda}(E_{\bullet})\)};
      \node (G) at (2.5,.5) {\(\G_{d}(E)\)};
      \draw[->] (F)--(Om) node[midway,left,scale=.9]{\(\varphi\)} node[midway,right]{\tiny bir.};
      \draw[right hook-latex] (Om)--(G) node[midway,below]{\tiny subvar.};
      \draw[right hook-latex] (F)--(Fl) node[midway,below]{\tiny subvar.};
      \draw[->] (Fl)to (G);
    \end{tikzpicture}
  }
  \hfil
  \subfloat[\label{fig:new}]{
    \begin{tikzpicture}[baseline=0pt]
      \node (F) at (-2,2) {\(F_{\nu}(E_{\bullet})\)};
      \node (Om) at (0,2) {\(\Omega_{\lambda}(E_{\bullet})\)};
      \node (G) at (2,2) {\(\G_{d}(E)\)};
      \node (X) at (0,.5) {\(X\)};
      \draw[->] (F)--(Om) node[midway,above,scale=.9]{\(\varphi\)} node[midway,below]{\tiny bir.};
      \draw[right hook-latex] (Om)--(G);
      \draw[->] (G)--(X) node[midway,below right,scale=.9]{\(\pi\)};
      \draw[->] (Om)--(X) node[midway,right,scale=.9]{\(\varpi_{\lambda}\)};
      \draw[->] (F)--(X) node[midway,below left,scale=.9]{\(\vartheta_{\nu}\)};
    \end{tikzpicture}
  }
  \caption{The desingularization \protect\subref{fig:old} and how we use it \protect\subref{fig:new}.}
  \label{fig:sumup}
\end{figure}

We also recall the standard approach of Kempf and Laksov (with our notation) for comparison:
\begin{center}
  \begin{tikzpicture}[baseline=0pt]
    \node (Fl) at (3,2) {\(\G_{d}(E)\times_{X} F_{\nu}(E_{\bullet})\)};
    \node (F) at (0,2) {\(F_{\nu}(E_{\bullet})\)};
    \node (Om) at (0,.5) {\(\Omega_{\lambda}(E_{\bullet})\)};
    \node (G) at (3,.5) {\(\G_{d}(E)\)};
    \draw[->] (F)--(Om) node[midway,left,scale=.9]{\(\varphi\)} node[midway,right]{\tiny bir.};
    \draw[right hook-latex] (Om)--(G);
    \draw[right hook-latex,transform canvas={yshift=2}] (F)--(Fl) node[midway,above]{\(s\)};
    \draw[->,transform canvas={yshift=-2}] (Fl)--(F) node[midway,below]{\(p_{2}\)};
    \draw[->] (Fl)--(G) node [midway,right]{\(p_{1}\)};
  \end{tikzpicture}
\end{center}
In their approach, one works with \(\G_{d}(E)\) as the base variety. To compute the fundamental class \([\Omega_{\lambda}(E_{\bullet})]\) in the Chow group of \(\G_{d}(E)\) one first computes the fundamental class \([s(F_{\nu}(E_{\bullet}))]\) in the Chow group of \(\G_{d}(E)\times_{X}F_{\nu}(E_{\bullet})\) and then one computes the push-forward along \(p_{1}\) of this specific class.

In our approach, we work above the base variety \(X\) and we use Gysin formulas for different morphisms satisfied for all classes; we do not need to compute the class \([F_{\nu}(E_{\bullet})]\) in \(A^{\bullet}(\F(1,\dotsc,d)(E))\).
There are two ways to push a class from \(\Omega_{\lambda}(E_{\bullet})\) to \(X\). Either we use the desingularization \(F_{\nu}(E_{\bullet})\) studied in Sect.~\ref{se:gysin} and we push-forward along \(\vartheta_{\nu}\), or we regard the Schubert bundle \(\Omega_{\lambda}(E_{\bullet})\) as a subvariety of the Grassmann bundle \(\G_{d}(E)\) and we push-forward along \(\pi\). In the latter case, the push-forward formula involves the fundamental class \([\Omega_{\lambda}(E_{\bullet})]\) in \(\G_{d}(E)\).
It remains to compare the expressions obtained in each way for some generators of the Chow group of \(\Omega_{\lambda}(E_{\bullet})\).
The only serious obstacle to compute \([\Omega_{\lambda}(E_{\bullet})]\in A^{\bullet}(\G_{d}(E))\) may be the difficulty to solve this system of equations.

In the present paper, we treat type \(A\) and we heavily rely on the strong duality theorem to simplify this last step.
We express the sought Schubert class as a certain linear combination
\[
  [\Omega_{\lambda}(E_{\bullet})]
  =
  \sum \coeff_{\beta}s_{\beta}(U),
\]
with unknowns \(\coeff_{\beta}\), and we consider the push-forwards of the Schur classes \(s_{\alpha}(U)\) along \(\varpi_{\lambda}\).
Using the duality theorem to express the intersection of two Schur classes leads to an invertible triangular system in the unknowns \(\coeff_{\beta}\).
Solving this system, we get an expression for \([\Omega_{\lambda}(E_{\bullet})]\) in the Schur basis involving only bundles from the reference flag \(E_{\bullet}\).
In our computations we use extensively the algebra and combinatorics of (flagged) skew Schur functions.

\subsection{Push-forward formula for Schubert bundles}
Firstly, we derive push-forward formulas for Schubert bundles from the formulas for Kempf--Laksov bundles.
\begin{prop}
  \label{coro:pi*f}
  The push-forward along \(\varpi_{\lambda}\colon\Omega_{\lambda}(E_{\bullet})\to X\) of a symmetric polynomial \(f\) in \(d\) variables with coefficients in \(A^{\bullet}X\) is
  \[
    (\varpi_{\lambda})_{\ast}(f(U))
    =
    \Big[\tprod_{i=1}^{d} t_{i}^{\nu_{i}-1}\Big]
    \left(
    f(t_{1},\dotsc,t_{d})
    \tprod_{1\leq i<j\leq d}(t_{i}-t_{j})
    \tprod_{1\leq i\leq d}s_{1/t_{i}}(E_{\nu_{i}})
    \right).
  \]
\end{prop}
\begin{proof}
  Using the notation of Figure~\ref{fig:new}:
  \begin{align*}
    (\vartheta_{\nu})_{\ast}
    f\big(\varphi^{\ast}U\vert_{\Omega_{\lambda}(E_{\bullet})}\big)
    &=
    (\varpi_{\lambda})_{\ast}
    \varphi_{\ast}
    f\big(\varphi^{\ast}U\vert_{\Omega_{\lambda}(E_{\bullet})}\big)
    \\&=
    (\varpi_{\lambda})_{\ast}
    \big(
    f(U\vert_{\Omega_{\lambda}(E_{\bullet})})\cap
    \varphi_{\ast}[F_{\nu}(E_{\bullet})]
    \big)
    \\&=
    (\varpi_{\lambda})_{\ast}
    \big(
    f(U\vert_{\Omega_{\lambda}(E_{\bullet})})\cap
    [\Omega_{\lambda}(E_{\bullet})]
    \big).
  \end{align*}
  The first equality holds by commutativity, the second by the projection formula,
  and the third because \(\varphi\) is a desingularization.
  Thus, dropping the pullback notation
  \[
    (\vartheta_{\nu})_{\ast}f(U)
    =
    (\varpi_{\lambda})_{\ast}(f(U)\cap[\Omega_{\lambda}(E_{\bullet})]).
  \]
  For the left hand side, the formula
  \[
    (\vartheta_{\nu})_{\ast}f(U)
    =
    \Big[\tprod_{i=1}^{d} t_{i}^{\nu_{i}-1}\Big]
    \left(
    f(t_{1},\dotsc,t_{d})
    \tprod_{1\leq i<j\leq d}(t_{i}-t_{j})
    \tprod_{1\leq i\leq d}s_{1/t_{i}}(E_{\nu_{i}})
    \right)
  \]
  holds by Theorem~\ref{theo:gysin} for \(\mu=\nu\).
\end{proof}

\begin{prop}
  \label{prop:schur_omega}
  The push-forward along \(\varpi_{\lambda}\colon\Omega_{\lambda}(E_{\bullet})\to X\) of a Schur class \(s_{\alpha}(U)\), for a partition \(\alpha\) is
  \[
    (\varpi_{\lambda})_{\ast}(s_{\alpha}(U))
    =
    \det\big(
    s_{\alpha_{i}-i+j-\dual\lambda_{j}}(E_{\nu_{j}})
    \big)_{1\leq i,j\leq d}.
  \]
\end{prop}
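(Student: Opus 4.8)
The plan is to reduce the statement to Proposition~\ref{prop:gysin} by transferring the push-forward from the Schubert bundle to its Kempf--Laksov desingularization $F_{\nu}(E_{\bullet})$, and then to carry out the index substitution dictated by the relation \(\nu=\dual\lambda+\rho\).

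First I would observe that the computation carried out in the proof of Proposition~\ref{cor:pi*f} already yields, for every symmetric polynomial \(f\) in \(d\) variables, the identity
\[
  (\vartheta_{\nu})_{\ast}f(U)
  =
  (\varpi_{\lambda})_{\ast}\big(f(U)\cap[\Omega_{\lambda}(E_{\bullet})]\big).
\]
This rests only on the commutativity of the triangle in Figure~\ref{fig:new}, the projection formula, and the fact that \(\varphi\) is birational, so that \(\varphi_{\ast}[F_{\nu}(E_{\bullet})]=[\Omega_{\lambda}(E_{\bullet})]\). Specializing to \(f=s_{\alpha}\) and invoking the convention \((\varpi_{\lambda})_{\ast}(a)=(\varpi_{\lambda})_{\ast}(a\cap[\Omega_{\lambda}(E_{\bullet})])\), this reads \((\varpi_{\lambda})_{\ast}(s_{\alpha}(U))=(\vartheta_{\nu})_{\ast}(s_{\alpha}(U))\), so it suffices to compute the right-hand side.

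Next, since \(\nu=\dual\lambda+\rho\) is a strict partition with \(d\) parts satisfying \(\nu\subseteq(n)^{d}\), Proposition~\ref{prop:gysin} applies with \(\mu=\nu\) and gives
\[
  (\vartheta_{\nu})_{\ast}(s_{\alpha}(U))
  =
  \det\big(s_{\alpha_{i}-i+d+1-\nu_{j}}(E_{\nu_{j}})\big)_{1\leq i,j\leq d}.
\]

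It then remains only to rewrite the subscript. From \(\nu_{j}=\dual\lambda_{j}+\rho_{j}=\dual\lambda_{j}+(d-j+1)\) one computes \(d+1-\nu_{j}=j-\dual\lambda_{j}\), whence \(\alpha_{i}-i+d+1-\nu_{j}=\alpha_{i}-i+j-\dual\lambda_{j}\), which converts the determinant above into the claimed one. I expect no genuine obstacle: the entire analytic content has already been absorbed into Proposition~\ref{prop:gysin} and the desingularization identity recalled from Proposition~\ref{cor:pi*f}, and the last step is a purely bookkeeping substitution of the formula \(\nu=\dual\lambda+\rho\).
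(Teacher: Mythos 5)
Your proposal is correct and follows exactly the paper's own route: the paper's proof of Proposition~\ref{prop:schur_omega} likewise invokes the desingularization identity \((\vartheta_{\nu})_{\ast}f(U)=(\varpi_{\lambda})_{\ast}(f(U)\cap[\Omega_{\lambda}(E_{\bullet})])\) established for Proposition~\ref{cor:pi*f}, specializes Proposition~\ref{prop:gysin} to \(\mu=\nu\), and performs the substitution \(\nu_{j}=\dual\lambda_{j}+d+1-j\). Your write-up merely spells out the bookkeeping that the paper compresses into one sentence.
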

\begin{proof}
  This follows from the previous considerations, since \(\nu_{j}=\dual\lambda_{j}+d+1-j\).
\end{proof}

\subsection{Schubert classes}
Lastly, we implement our strategy.
Before going on, let us define the following determinantal classes.
\begin{defi}[Flagged skew Schur classes]
  For \(\lambda,\gamma\) partitions with at most \(d\) parts and \(A_{\bullet},B_{\bullet}\) partial flags with \(d\) members, define
  \[
  s_{\lambda/\gamma}(A_{\bullet},B_{\bullet})
  \bydef
  \det\left(
  s_{\lambda_{i}-i+j-\gamma_{j}}(A_{i}+B_{j})
  \right)_{1\leq i,j\leq d}.
  \]
For shortness, we also define:
  \(
  s_{\lambda/\gamma}(A_{\bullet})
  \bydef
  s_{\lambda/\gamma}(A_{\bullet},0)
  \),
  and
  \(
  s_{\lambda/\gamma}^{*}(A_{\bullet})
  \bydef
  s_{\lambda/\gamma}(-A_{\bullet},0)
  \).
\end{defi}

We will need a variation for flagged Schur classes of the classical formula:
\[
  s_{\skew{\lambda}{\mu}}(A+B)
  =
  \sum_{\mu\subseteq\nu\subseteq\lambda}
  s_{\skew{\lambda}{\nu}}(A)
  s_{\skew{\nu}{\mu}}(B),
\]
(see~\cite{Macdonald}). Namely, we claim the following result.
\begin{lemm}
  \label{lemm:expansion}
  For \(\lambda,\gamma\) partitions with at most \(d\) parts and \(A_{\bullet},B_{\bullet}\) partial flags with \(d\) members, one has
  \[
  s_{\skew{\lambda}{\mu}}(A_{\bullet},B_{\bullet})
  =
  \sum_{\mu\subseteq\nu\subseteq\lambda}
  s_{\skew{\lambda}{\nu}}(A_{\bullet},0)\,
  s_{\skew{\nu}{\mu}}(0,B_{\bullet}).
  \]
\end{lemm}
The proof is direct, using the algorithm of straightening, so we admit it.

Let us denote by \(E/E_{\nu_{\bullet}}^{\rev}\) the dual flag of \(E_{\nu_{\bullet}}\), the \(j\)th member of which is the quotient bundle \(E/E_{\nu_{d+1-j}}\). Then, we can give the following compact formula for the Schubert classes in the Schur basis.
\begin{theo}
  With the notation of Section \ref{se:gysin}, for a partition \(\lambda\subseteq(n-d)^{d}\), the fundamental class of the Schubert bundle \(\Omega_{\lambda}(E_{\bullet})\subseteq\G_{d}(E)\) is
  \[
    [\Omega_{\lambda}(E_{\bullet})]
    =
    \sum_{\beta\subseteq\lambda}
  s_{\skew{\lambda}{\beta}}^{*}(E/E_{\nu_{\bullet}}^{\rev})\,
    s_{\beta}(U).
  \]
\end{theo}
\begin{proof}
  On one hand, the Schubert bundle \(\iota\colon\Omega_{\lambda}(E_{\bullet})\hookrightarrow\G_{d}(E)\) is a subvariety of \(\G_{d}(E)\), thus by the basis theorem its fundamental class \([\Omega_{\lambda}(E_{\bullet})]\in A^{\bullet}(\G_{d}(E))\) admits an expression under the form
  \[
  [\Omega_{\lambda}(E_{\bullet})]
  =
  \sum_{\beta\subseteq(n-d)^{d}}
  \coeff_{\beta}
  s_{\beta}(U).
  \]
  The push-forward of \(s_{\alpha}(U)\) along \(\iota\) is then
  \[
  \iota_{\ast}s_{\alpha}(U)
  =
  [\Omega_{\lambda}(E_{\bullet})]
  s_{\alpha}(U)
  =
  \sum_{\beta\subseteq(n-d)^{d}}
  \coeff_{\beta}
  s_{\alpha}(U)
  s_{\beta}(U).
  \]
  Now, by Theorem~\ref{theo:duality} the push-forward to \(X\) of \(s_{\alpha}(U)\) is
  \[
  (\varpi_{\lambda})_{\ast}s_{\alpha}(U)
  =
  (\pi\circ\iota)_{\ast}s_{\alpha}(U)
  =
  \sum_{\beta\subseteq(n-d)^{d}}
  \coeff_{\beta}
  \pi_{\ast}s_{\alpha}(U)s_{\beta}(U)
  =
  \sum_{\beta\subseteq(n-d)^{d}}
  \coeff_{\beta}
  s_{\skew{\alpha}{\dual\beta}}(E).
  \]

  On the other hand, by Proposition~\ref{prop:schur_omega} the push-forward to \(X\) of \(s_{\alpha}(U)\) is
  \[
  (\varpi_{\lambda})_{\ast}s_{\alpha}(U)
  =
  s_{\skew{\alpha}{\dual\lambda}}(0,E_{\nu_{\bullet}}).
  \]
  So, for each partition \(\alpha\), we get the equation in the unknowns \(\coeff_{\beta}\)
  \[
    \label{eq:alpha}
    \tag{\ensuremath{\alpha}}
    \sum_{\beta\subseteq(n-d)^{d}}
    \coeff_{\beta}
    s_{\skew{\alpha}{\dual\beta}}(E)
    =
s_{\skew{\alpha}{\dual\lambda}}(E,E_{\nu_{\bullet}}-E).
  \]
  Notice that the system made of equations \eqref{eq:alpha} for \(\alpha\subseteq(n-d)^{d}\) ordered by lexicographic order is a lower triangular system, with \(1\)'s on the diagonal.
  It is thus invertible and has a unique solution.
  Indeed,
  if \(\beta=\dual\alpha\), then \(s_{\skew{\alpha}{\dual\beta}}(E)=1\) and
  if \(\dual\beta\not\subseteq\alpha\), \textit{e.g.} if \(\beta\succ\dual\alpha\) in lexicographic order, then \(s_{\skew{\beta}{\dual\alpha}}(E)=0\).

  We will now modify the right hand side of \eqref{eq:alpha} to make it look alike the left hand side, using Lemma~\ref{lemm:expansion}.
   The equation \eqref{eq:alpha} becomes
   \[
   \sum_{\beta\subseteq(n-d)^{d}}
   \coeff_{\beta}
   s_{\skew{\alpha}{\dual\beta}}(E)
   =
   \sum_{\dual\lambda\subseteq\dual\beta\subseteq\alpha}
   s_{\skew{\alpha}{\dual\beta}}(E,0)
   s_{\skew{\dual\beta}{\dual\lambda}}(0,E_{\nu_{\bullet}}-E)
   =
   \sum_{\dual\lambda\subseteq\dual\beta\subseteq\alpha}
   s_{\skew{\alpha}{\dual\beta}}(E)
   s_{\skew{\lambda}{\beta}}^{*}(E-E_{\nu_{\bullet}}^{\rev}).
   \]
  Thus we obtain a solution (the unique solution by our above observations) by setting
  for any \(\beta\subseteq(n-d)^{d}\)
  \[
    \coeff_{\beta}
    =
    s_{\skew{\lambda}{\beta}}^{*}(E/E_{\nu_{\bullet}}^{\rev}).
    \qedhere
  \]
\end{proof}

For the sake of completeness, we now give a derivation of the Giambelli formula for vector bundles.
\begin{coro}[Giambelli formula]
  With the notation of Section \ref{se:gysin}, for a partition \(\lambda\subseteq(n-d)^{d}\), the fundamental class of the Schubert bundle \(\Omega_{\lambda}(E_{\bullet})\subseteq\G_{d}(E)\) is
  \[
    [\Omega_{\lambda}(E_{\bullet})]
    =
    \det(c_{\lambda_{i}-i+j}(E-E_{\nu_{d+1-i}}-U))_{1\leq i,j\leq d}.
  \]
\end{coro}
\begin{proof}
  This is again a plain application of Lemma~\ref{lemm:expansion}.
  One has
  \[
  s_{\lambda}(E_{\nu_{\bullet}}^{\rev}-E,U)
  =
  \sum_{\beta}
  s_{\skew{\lambda}{\beta}}(E_{\nu_{\bullet}}^{\rev}-E,0)
  s_{\beta}(0,U).
  \qedhere
  \]
\end{proof}

\subsection*{Acknowledgements}
Piotr Pragacz is supported by National Science Center (NCN) grant no.~2014/13/B/ST1/00133

\backmatter
\bibliographystyle{smfalpha}
\bibliography{gysin}
\vfill
\end{document}